\documentclass{iasr}
\setcounter{page}{1}

\usepackage[dvips]{graphicx}
\usepackage{amsthm}
\usepackage{enumerate}

\begin{document}

\title{Entropy of continuous maps on quasi-metric spaces}
\author[Sayyari Y. et.~al.]{Sayyari Yamin \affil{1},
       Molaei Mohammadreza \affil{1}\comma \corrauth, Moghayer Saeed M. \affil{2}}
 \address{\affilnum{1}\ Department of Pure Mathematics, Shahid Bahonar University of Kerman, Kerman, Iran.\\
           \affilnum{2}\ Netherlands Organization for Applied Scientific Research (TNO), van Mourik Broekmanweg 6, 2628XE, Delft, The Netherlands.}
 \corraddr{Molaei Mohammadreza , Department of Pure Mathematics, Shahid Bahonar University of Kerman, Kerman, Iran.
          Email: \tt mrmolaei@uk.ac.ir}
\received{23 january~ 2015}

\newtheorem{thm}{Theorem}[section]
 \newtheorem{cor}[thm]{Corollary}
 \newtheorem{lem}[thm]{Lemma}
 \newtheorem{prop}[thm]{Proposition}
 \newtheorem{defn}[thm]{Definition}
 \newtheorem{rem}[thm]{Remark}
\newtheorem{Example}{Example}[section]

\begin{abstract}
The category of metric spaces is a subcategory of quasi-metric spaces. In this paper the notion of entropy for the continuous maps of a quasi-metric space is extended via spanning and separated sets. Moreover, two metric spaces that are associated to a given quasi-metric space are introduced and the entropy of a map of a given quasi-metric space and  the maps of its associated metric spaces are compared. It is shown that the entropy of a map when symmetric properties is included is grater or equal to the entropy in the case that the symmetric property of the space is not considered.
\end{abstract}

\keywords{entropy; topological entropy; quasi-metric space; spanning set; separated set.}
\ams{37B40, 54C70, 28D20}
\maketitle

\section{Introduction}
\label{sec1}
Historically, the term entropy was initially originated in classical thermodynamics in 1864 by Rudolf Clausius as state function of a thermodynamic system in which entropy is described as dissipative energy use of a thermodynamic system during a change of state \cite{Mu}. In mathematics, the notion of topological entropy was first introduced by McAndrew, Adler, and Konheim \cite{Adl, Ban, Si, Wa}  as an invariant of topological conjugacy.  Shannon  introduced the notion of entropy for measurable partitions of a probability space, and he applied this concept in information theory \cite{Sha}.  For metric spaces, another approach was presented by Dinaburg and Bowen \cite{Bo, Br, Din, Pa} via separating and spanning sets that has been considered and utilized in a verity of applications \cite{Ber, Dik, Ju, Ju2, KE1, KE, Mol, Pa, Ya}. In this paper the notion of entropy of a continuous map on a quasi-metric space is defined via separating and spanning sets in quasi-metric spaces. The metric spaces that are associated to a quasi-metric space are constructed. Also a method is developed for the calculation of the entropy of maps on quasi-metric spaces using the entropy of its associated metric spaces.  In theorem $4.1$ the role of the symmetric property of the space in topological entropy is shown.

\section{Entropy by using of spanning and separated sets}
\label{sec2}
In this section we extend the concept of topological entropy for the maps on quasi-metric spaces which are continuous according to a special topology defined by quasi-metrics.
\begin{definition} \cite{Sto, St}
 Let $X$ be a set, a quasi metric  is defined as a function $e: X\times X\longrightarrow [0, +\infty)$ that satisfies the following axioms.
\begin{enumerate}[\bf (1)]
\item $e(x, y)\geq 0,$
\item $e(x, y)=0\Leftrightarrow x=y,$
\item $e(x, z)\leq e(x,y)+e(y,z).$
\end{enumerate}
for all $x,y,z\in X.$
\end{definition}
$(X,e)$ is called a quasi-metric space. So quasi-metric has all the properties of metrics except symmetry.
\begin{example}
Let $X$ be the set of real numbers and let:\\
\begin{align*}
  e(x,y)= \left\{\begin{array}{rr} y-x & y\geq x \\ 1 & \mathrm{otherwise }  \end{array}\right.
\end{align*}
  Then $e$ is a quasi-metric on $X$. \\Let us check the condition 3.\\
  $$e(x,z)=\left\{ \begin{array}{ll}z-x\leq y-x+y-z=e(x,y)+e(y,z) & if ~ x\leq z \leq y\\  z-x\leq y-x+z-y=e(x,y)+e(y,z) & if ~ x\leq y \leq z \\ 1 \leq 1+z-y=e(x,y)+e(y,z) & if ~ y\leq z \leq x \\z-x \leq 1 + z-y=e(x,y)+e(y,z) & if ~ y\leq x \leq z \\ 1 \leq y-x+1=e(x,y)+e(y,z) & if ~ z\leq x \leq y \\ 1 \leq 1+1=e(x,y)+e(y,z) & if ~ z\leq y \leq x \end{array}.\right.$$
\end{example}
If $(X,e)$ is a quasi-metric space, then we define:
 \begin{align*}
& B^r_t(p)=\{x\in X: e(p,x)<t\}, ~ \mathrm{and} ~ \overline{B^r_t(p)}=\{ x\in X:e(p,x)\leq t \}.
\end{align*}
$B^r_t(p)$ is called the open right $t$-ball centered at $p$, and $\{B^r_t(p),~ p\in X,~ t,~r\in R\}$ is a base for a topology on $X$.\\
{\bf Remark:} Let $\{ x_n \}\subset R$ be any strictly increasing sequence convergence to $x$ in the topological space generated by open right $t$-balls of the Example $1$, one can easily prove that $\{x_n, x\}$ is not compact, but if $\{ x_n \}\subset R$ is strictly decreasing sequence then $\{x_n, x\}$ is compact.\\
We also define an open left $t$-ball centered at a point $p$ by:
$$B^l_t(p)=\{x\in X: e(x,p)<t\}.$$
We define its closure by
$\overline{B^l_t(p)}=\{ x\in X:e(x,p)\leq t \}.$
An open $t$-ball centered at a point $p$ is the set:
 \begin{align*}
& B_t(p)=\{x\in X: e(x,p)<t ~ \mathrm{and} ~ e(p,x)<t\}= B^r_t(p)\cap B^l_t(p), \\
\end{align*}
\begin{center}
and its closure is
\end{center}
\begin{align*}
 &\overline{B_t(p)}=:\{ x\in X:e(x,p)\leq t ~
\mathrm{and}
~ e(p,x)\leq t\} =\overline{B^r_t(p)} \cap \overline{B^l_t(p)} .
\end{align*}
\begin{example}
With the assumption of Example $1$ we have
\begin{align*}
B^r_t(p)= \left\{\begin{array}{lr} [p,p+t) & t\leq 1 \\ (-\infty, p+t) & t>1  \end{array},\right.
B^l_t(p)= \left\{\begin{array}{lr} (p-t,p] & t\leq 1 \\ (p-t, +\infty) & t>1  \end{array},\right.
\end{align*}
 \begin{center}
 and
 \end{center}
 \begin{align*}
 \overline{B^r_t(p)} = \left\{\begin{array}{lr} [p,p+t] & t<1 \\ (-\infty, p+t] & t\geq 1 \end{array},\right.
  \overline{B^l_t(p)} = \left \{\begin{array}{lr} [p-t,p] & t<1 \\~
  [ p-t, +\infty ) & t\geq 1  \end{array}.\right.
  \end{align*}
   In the rest of this paper the topology of $X$ is the topology generated by\\ \{${B_t(p), p\in X, t,r\in R}$\}.
  \end{example}
If $T:X\longrightarrow X$ is a continuous map and, $n$ is a natural number we define a new quasi-metric $e_n$ on $X$ by:
 \begin{align*}
  e_n(x,y)=\max_{0\leq i\leq {n-1}}e(T^i(x),T^i(y)).
 \end{align*}
\begin{definition}
 For a natural number $n$, $\epsilon >0$, and a compact subset $K$ of $X$ we say that a subset $F$ of $X$ is an $(n,\epsilon)$-span of $K$ with respect to $T$, if it satisfies the following properties:\\
If $ x\in K$, then there is $y\in F$ such that $e_n(x,y)\leq \epsilon$, and $e_n(y,x)\leq \epsilon$ (i.e.,
$x\in \cap_{i=0}^{n-1}T^{-i}\overline{B_\epsilon (T^iy)}).$
\end{definition}
\begin{definition}
 If $n$ is a natural number, $\epsilon >0$ and $K$ is a compact subset of $X$ then we denote the smallest cardinality of any ($n, \epsilon$)-spanning set of $K$ with respect to $T$ by $r^\prime_n(\epsilon,K) $. (When we need to emphasise on $T$ we shall write $r^\prime
_n(\epsilon, K, T)).$
\end{definition}
Since $K$ is compact, then $r^\prime_n(\epsilon,K)<\infty$. and it is clear that $r^\prime_n(.,K)$ is a non-increasing map on $(0,+\infty)$.\\
If $K$ a compact subset of $X$, and $\epsilon>0$ then $r^\prime(\epsilon,K,T)=\limsup_{n\rightarrow \infty}\frac{\log r^\prime_n(\epsilon,K)}{n}$, we also denote $r^\prime(\epsilon,K,T)$ by $r^\prime(\epsilon,K,T,e)$.\\
Logarithm has essential role in definition of any kind of entropy, because this function is the only function which translate  the physical properties of entropy to mathematical structures of it (for details see Theorem 4.1 of \cite{Wa}).

The value of $r^\prime_n(\epsilon,K)$ could be $\infty$, and $r^\prime_n(.,K)$ is a non-increasing map on $(0,+\infty)$.
\begin{definition}
 Let $h^\prime(T,K)=\lim_{\epsilon\rightarrow 0}r^\prime(\epsilon,K,T)$, where $K$ is a compact subset $X$. Then the topological entropy of $T$ is $h^\prime(T)=\sup_Kh^\prime (T,K)$, where the supremum is taken over the collection of all compact subsets of $X$. We sometimes write $h^\prime_e(T)$ instead of $h^\prime (T)$ to emphasis the dependence on $e$.
 \end{definition}
Now we shall give an equivalent definition. In this definition we use of the idea of separated sets which are dual to spanning sets.\\
Here dual means that in separated notion we pay attention to a set by open balls which are inside of it, but in spanning notion we pay attention to a set by open ball which can be outside of it.\\
 For a natural number $n$, $\epsilon >0$, and a compact subset $K$ of $X$, a subset $E$ of $X$ is called an $(n,\epsilon)$-separated of $K$ with respect to $T$, if it satisfies the following property:\\
If $x,y\in E$, and $x\neq y$, then $e_n(x,y)>\epsilon$ or $e_n(y,x)>\epsilon.$(i.e., If $x,y\in E$, and $x\neq y$, then
 $y\notin \cap_{i=0}^{n-1}T^{-i}\overline {B^r_\epsilon (T^ix)}$ or
$x\notin \cap_{i=0}^{n-1}T^{-i}\overline {B^r_\epsilon (T^iy)}$).
 If $n$ is a natural number, $\epsilon >0$ and $K$ is a compact subset of $X$ then $s^\prime_n(\epsilon,K) $ denotes the largest cardinality of any ($n, \epsilon$)-separated subset of $K$ with respect to on $T$. (When we need to emphasis $T$ we shall write $s^\prime
_n(\epsilon, K, T)).$
\begin{theorem}
 $r^\prime_n(\epsilon, K, T)\leq s^\prime_n(\epsilon, K, T)\leq r^\prime_n(\frac{\epsilon}{2}, K, T)$.
\end{theorem}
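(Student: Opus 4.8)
The plan is to mimic the classical argument (as in Walters) that compares spanning and separated numbers, being careful about the asymmetry of $e_n$. There are two inequalities to establish.

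For the first inequality $r'_n(\epsilon,K,T)\le s'_n(\epsilon,K,T)$, I would take a maximal $(n,\epsilon)$-separated subset $E\subseteq K$ of cardinality $s'_n(\epsilon,K,T)$ (finite since $K$ is compact), and argue that $E$ is itself an $(n,\epsilon)$-span of $K$. Indeed, if some $x\in K$ had $e_n(x,y)>\epsilon$ \emph{or} $e_n(y,x)>\epsilon$ for every $y\in E$, then... here is the first subtlety: the spanning condition requires \emph{both} $e_n(x,y)\le\epsilon$ and $e_n(y,x)\le\epsilon$ for a single $y$, whereas failing the separated condition only gives one of the two inequalities for each $y$. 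So I would instead use the definition directly: maximality of $E$ means $E\cup\{x\}$ is not $(n,\epsilon)$-separated for $x\notin E$, which forces the existence of $y\in E$ with \emph{both} $e_n(x,y)\le\epsilon$ and $e_n(y,x)\le\epsilon$ — precisely the spanning requirement. (For $x\in E$ one takes $y=x$.) Hence $E$ spans $K$ and $r'_n\le|E|=s'_n$.

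For the second inequality $s'_n(\epsilon,K,T)\le r'_n(\tfrac\epsilon2,K,T)$, I would let $E\subseteq K$ be an $(n,\epsilon)$-separated set and $F$ an $(n,\tfrac\epsilon2)$-span of $K$ of minimal cardinality, and define a map $\phi:E\to F$ sending each $x\in E$ to some $y\in F$ with $e_n(x,y)\le\tfrac\epsilon2$ and $e_n(y,x)\le\tfrac\epsilon2$. The claim is that $\phi$ is injective, which gives $|E|\le|F|$ and hence $s'_n\le r'_n(\tfrac\epsilon2)$. If $\phi(x)=\phi(x')=y$ with $x\ne x'$, then by the triangle inequality (axiom (3), valid for $e_n$ as well) $e_n(x,x')\le e_n(x,y)+e_n(y,x')\le\tfrac\epsilon2+\tfrac\epsilon2=\epsilon$ and symmetrically $e_n(x',x)\le\epsilon$, contradicting that $E$ is $(n,\epsilon)$-separated. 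Taking $E$ of maximal cardinality finishes it.

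The main obstacle — and the only place where the quasi-metric (non-symmetric) setting genuinely differs from the metric case — is keeping track of the \emph{order} of arguments in $e_n(\cdot,\cdot)$ throughout: one must verify that $e_n$ still satisfies the triangle inequality in the correct direction (it does, inherited coordinatewise from $e$ via the max), and that the two-sided closed-ball condition in the definition of $(n,\epsilon)$-span is exactly what maximality of a separated set delivers. Once the bookkeeping of left/right balls is done honestly, both steps are short; no compactness beyond finiteness of $s'_n$ and $r'_n$ is needed.
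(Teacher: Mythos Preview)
Your proposal is correct and follows essentially the same route as the paper's own proof: a maximal $(n,\epsilon)$-separated set is shown to be an $(n,\epsilon)$-span (first inequality), and an injection from any $(n,\epsilon)$-separated set into any $(n,\tfrac{\epsilon}{2})$-span is built via the triangle inequality for $e_n$ (second inequality). If anything, your write-up is more careful than the paper's about the asymmetry bookkeeping---you make explicit that the negation of ``$(n,\epsilon)$-separated'' yields \emph{both} $e_n(x,y)\le\epsilon$ and $e_n(y,x)\le\epsilon$, exactly matching the two-sided spanning condition, and you note that $e_n$ inherits the (directed) triangle inequality from $e$; the paper leaves these points implicit.
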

 \begin{proof}
 If $E$ is an $(n,\epsilon)$-separated subset of $K$ with the maximal cardinality then $E$ is an $(n,\epsilon)$-spanning set for $K$, because if $x\in K$ then there is $y \in E$ such that $e_n(x,y)\leq \epsilon$ and  $e_n(y,x)\leq \epsilon$. Therefore $r^\prime_n(\epsilon, K, T)\leq s^\prime_n(\epsilon, K, T)$. To show the
 other inequality suppose $E$ is an $(n,\epsilon)$-separated subset of $K$ and $F$ is an $(n,\frac{\epsilon}{2})$-spanning set for $K$. Define $\Phi : E\longrightarrow F$ as the following form:\\
 For given $x\in E$ we choose a point $\Phi (x)\in F$ such that $e_n(x,\Phi (x))\leq \frac{\epsilon}{2}$ and  $e_n(\Phi (x),x)\leq \frac{\epsilon}{2}$.  $\Phi$ is injective, because if $e_n(x,y)\leq \frac{\epsilon}{2}$,  $e_n(y,x)\leq \frac{\epsilon}{2}$,  $e_n(x,z)\leq \frac{\epsilon}{2}$ and  $e_n(z,x)\leq \frac{\epsilon}{2}$ then $e_n(y,z)\leq \epsilon$ and $e_n(z,y)\leq \epsilon$, which is a contradiction. Therefore
 the cardinality of $E$ is not greater than the cardinality of $F$. Hence $s^\prime_n(\epsilon, K, T)\leq r^\prime_n(\frac{\epsilon}{2}, K, T)$. \\
Since $K$ is compact, then $s^\prime_n(\epsilon,K)<\infty$, and it is obvious that $s_n(.,K)$ is a non-increasing map on $(0,+\infty)$.
\end{proof}
Theorem $2.1$ also implies that $s^\prime_n(\epsilon,K,T)<\infty$.

  If $K$ is a compact subset of $X$, $\epsilon>0$ then we define $s^\prime(\epsilon,K,T)$ by
   $\limsup_{n\rightarrow \infty}\frac{\log s^\prime_n(\epsilon,K)}{n}.$\\ We also write $s^\prime(\epsilon,K,T,e)$ if we need to emphasis to the quasi-metric $e$. The value of $s^\prime_n(\epsilon,K)$ could be $\infty$, and it is obvious that $s_n(.,K)$ is a non-increasing map of $(0,+\infty)$.\\
 {\bf Remark:}\\
 We have\\
 1) $r^\prime(\epsilon, K, T)\leq s^\prime(\epsilon, K, T)\leq r^\prime(\frac{\epsilon}{2}, K, T)$,\\
 2) $s(.,K)$ is non-increasing map of $(0,+\infty)$,\\
 3) $h^\prime(T,K)=\lim_{\epsilon\rightarrow 0}s^\prime(\epsilon,K,T)$, $h^\prime(T)=\sup_Kh^\prime (T,K)= \lim_{\epsilon\rightarrow 0}s^\prime(\epsilon,K,T)$, where the supremum is taken over the collection of all compact subset of $X$.
  \section{The entropy of maps on metric spaces and the entropy of maps on quasi-metric spaces }
  \label{sec3}
  In this section we define a metric $d_e$ by quasi-metric $e$, and we study the relation between topological entropy $h_{d_e(T)}$ and $h_e^\prime(T),$ where $h_{d_e}(T)$ is the topological entropy $h(T)$ with respect to the metric $d_e$ and $h_e^\prime(T)$ is the topological entropy $h^\prime (T)$ with respect to the quasi-metric $e$.\\
  If $(X,e)$ is a quasi-metric space then it is obvious that $(X,d_e)$ is a metric space, where $d_e$ is a metric on $X$ defined by:
    \begin{align*}
   & d_e:X\times X\longrightarrow [0,+\infty)\\
   & d_e(x,y)=\frac{e(x,y)+e(y,x)}{2}.
     \end{align*}
   For the metric space $(X,d_e)$, suppose the map $T:(X,d_e)\longrightarrow (X,d_e)$ is continuous and $h_{de}(T)$ is its topological entropy, (i.e.,
     \begin{align*}
      h_{de}(T)&=\sup_K\lim_{\epsilon \rightarrow 0}\limsup_{n\rightarrow \infty}\frac{\log (r_n(\epsilon, K, T, d_e))}{n}\\&=\sup_K\lim_{\epsilon \rightarrow 0}\limsup_{n\rightarrow \infty}\frac{\log (s_n(\epsilon, K, T, d_e))}{n},
        \end{align*}
         where the supremum is taken over the collection of all compact subsets of $X$ and \\ $r_n(\epsilon, K, T, d_e)$ denotes the smallest cardinality of any $(n,\epsilon)$-spanning set
   for $K$ with respect to the metric $d_e$ and $T$, and $s_n(\epsilon, K, T, d_e)$ denotes the largest cardinality of any $(n,\epsilon)$-separated subset of $K$ with respect to the metric $d_e$ and $T$).\\
    Since $e(x,y)<\epsilon $ and $e(y,x)<\epsilon$ then $d_e(x,y)<\epsilon,$ so $B(x,e,\epsilon)\subseteq B(x,d_e,\epsilon).$ Since $d_e(x,y)<\epsilon,$ then $e(x,y)<2\epsilon $ and $e(y,x)<2\epsilon,$ so $B(x,d_e,\epsilon)\subseteq B(x,e,2\epsilon).$ Therefore the topology generated by open sets $\{B_{d_e}(x,\epsilon), x\in X, \epsilon>0\}$ is the same as the topology generated by open sets $\{B_r(x), x\in X, r>0\}.$\\
  \begin{theorem}
   $h_{d_e}(T)=h_e^\prime (T).$
   \end{theorem}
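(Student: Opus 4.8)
The plan is to compare the $n$-step quasi-metric $e_n$ with the $n$-step metric $(d_e)_n$ pointwise, then propagate that comparison through the spanning numbers, the $\limsup$, and the two limits that assemble the entropy. The starting estimate is
\[
\frac{1}{2}\,\max\{e_n(x,y),\,e_n(y,x)\}\ \le\ (d_e)_n(x,y)\ \le\ \max\{e_n(x,y),\,e_n(y,x)\}
\]
for all $x,y\in X$ and all natural $n$. This follows coordinatewise: for each $i$ with $0\le i\le n-1$, setting $a=e(T^ix,T^iy)$ and $b=e(T^iy,T^ix)$ one has $\frac{1}{2}\max\{a,b\}\le\frac{a+b}{2}\le\max\{a,b\}$; taking the maximum over $i$ preserves the inequalities (the maximum of a pair distributes over $\max_i$, and the constant $\frac12$ factors out), which gives exactly the displayed bounds.

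Next I would convert this into a sandwich for the spanning numbers. If $F$ is an $(n,\epsilon)$-span of a compact set $K$ with respect to $e$, then for each $x\in K$ a witness $y\in F$ has $e_n(x,y)\le\epsilon$ and $e_n(y,x)\le\epsilon$, hence $(d_e)_n(x,y)\le\epsilon$ by the right-hand bound; so $F$ is an $(n,\epsilon)$-span with respect to $d_e$, and $r_n(\epsilon,K,T,d_e)\le r^\prime_n(\epsilon,K,T,e)$. Conversely, if $F$ is an $(n,\epsilon)$-span with respect to $d_e$, then the witness $y$ of $x\in K$ satisfies $(d_e)_n(x,y)\le\epsilon$, and the left-hand bound forces $\max\{e_n(x,y),e_n(y,x)\}\le2\epsilon$, i.e.\ both $e_n(x,y)\le2\epsilon$ and $e_n(y,x)\le2\epsilon$; so $F$ is an $(n,2\epsilon)$-span with respect to $e$, and $r^\prime_n(2\epsilon,K,T,e)\le r_n(\epsilon,K,T,d_e)$. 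Together,
\[
r^\prime_n(2\epsilon,K,T,e)\ \le\ r_n(\epsilon,K,T,d_e)\ \le\ r^\prime_n(\epsilon,K,T,e).
\]

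Finally I would apply $\limsup_{n\to\infty}\frac{1}{n}\log(\cdot)$, which preserves the inequalities, to obtain $r^\prime(2\epsilon,K,T,e)\le r(\epsilon,K,T,d_e)\le r^\prime(\epsilon,K,T,e)$, and then let $\epsilon\to0$. Since $\epsilon\mapsto r^\prime(\epsilon,K,T,e)$ is non-increasing, the outer limits agree, $\lim_{\epsilon\to0}r^\prime(2\epsilon,K,T,e)=\lim_{\epsilon\to0}r^\prime(\epsilon,K,T,e)=h^\prime(T,K)$, so the squeeze makes the local metric entropy of $T$ on $K$ equal $h^\prime(T,K)$ for every compact $K$; taking the supremum over all compact subsets of $X$ yields $h_{d_e}(T)=h^\prime_e(T)$. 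The coordinatewise inequality and the $\limsup$/limit bookkeeping are routine; the one place deserving a word of care --- not a genuine obstacle --- is that the two entropies must be supremized over the \emph{same} collection of compact sets, which is exactly what the paragraph preceding the theorem provides by showing that the $e$-topology and the $d_e$-topology coincide. (The same argument runs verbatim with separated sets and $s^\prime_n$ in place of $r^\prime_n$, via the remark after Theorem 2.1.)
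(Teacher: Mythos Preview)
Your proof is correct and follows essentially the same route as the paper: both establish the sandwich $r^\prime_n(2\epsilon,K,T,e)\le r_n(\epsilon,K,T,d_e)\le r^\prime_n(\epsilon,K,T,e)$ by comparing $(n,\epsilon)$-spanning sets in the two senses, and then pass to the limits. The only difference is cosmetic---you first package the comparison as the clean pointwise inequality $\tfrac12\max\{e_n(x,y),e_n(y,x)\}\le (d_e)_n(x,y)\le\max\{e_n(x,y),e_n(y,x)\}$ and are more explicit about the $\limsup$/$\epsilon\to0$/sup-over-$K$ bookkeeping (including the remark that the two topologies share the same compact sets), whereas the paper argues the spanning-set inclusions directly and jumps straight to the conclusion.
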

   \begin{proof}
   We have
 \begin{center}
  $d_{en}(x,y)=\max \{d_e(x,y),...,d_e(T^{n-1}x,T^{n-1}y)\}=\max\{\frac{e(x,y)+e(y,x)}{2},...,\frac{e(T^{n-1}x,T^{n-1}y)+e(T^{n-1}y,T^{n-1}x)}{2}\}.$
  \end{center}
  So if $E$ is an $(n,\epsilon)$-spanning set for $K$ of minimal cardinality with respect to the quasi-metric $e$ and $T$, then $E$ is an $(n,\epsilon)$-spanning set for $K$ with respect to the metric $d_e$ and $T$, because if $e(T^{i-1}x,T^{i-1}y)\leq \epsilon$ and $e(T^{i-1}y,T^{i-1}x)\leq \epsilon$ for $i=0,...,n-1$, then $d_{en}(x,y)\leq \epsilon$. Hence $r_n(\epsilon,K,T,d_e)\leq r^\prime_n(\epsilon,K,T,e).$ If $E$ is an $(n,\epsilon)$-separated subset for $K$ of maximal cardinality with respect to the quasi-metric $d_e$ and $T$, then $E$ is an $(n,\epsilon)$-separated subset for $K$ with respect to the metric $e$ and $T$, therefore $s_n(\epsilon,K,T,d_e)\leq s^\prime_n(\epsilon,K,T,e).$ If $E$ is an $(n,\epsilon)$-spanning set for $K$ of minimal cardinality with respect to the metric $d_e$ and $T$, then $E$ is an $(n,2\epsilon)$-spanning set for $K$ with respect to the quasi-metric $e$ and $T$, because if $d_{en}(x,y)\leq \epsilon$, then $\frac{e(T^{i-1}x,T^{i-1}y)+e(T^{i-1}y,T^{i-1}x)}{2}\leq \epsilon$ for $i=0,...,n-1$. Thus $e(T^{i-1}x,T^{i-1}y)\leq 2\epsilon $ and
 $e(T^{i-1}y,T^{i-1}x)\leq 2\epsilon $, for $i=0,...,n-1$. So $r^\prime_n(2\epsilon,K,T,e)\leq r_n(\epsilon,K,T,d_e).$ Hence  $r^\prime_n(2\epsilon,K,T,e)\leq r_n(\epsilon,K,T,d_e)\leq r^\prime_n(\epsilon,K,T,e)$. Thus $h_{d_e}(T)=h_e^\prime (T).$
 \end{proof}
 Now we define another metric on $(X,e).$
 If $(X,e)$ is a quasi-metric space, then $(X,m_e)$ is a metric space, where $m_e(x,y)=\max \{e(x,y), e(y,x)\}$.\\
  If $e(x,y)<\epsilon $ and $e(y,x)<\epsilon$ then $m_e(x,y)<\epsilon,$ so $B(x,e,\epsilon)\subseteq B(x,m_e,\epsilon).$ If $m_e(x,y)<\epsilon,$ then $e(x,y)<\epsilon $ and $e(y,x)<\epsilon,$ so $B(x,m_e,\epsilon)\subseteq B(x,e,\epsilon).$ Therefore the topology generated by open sets $\{B_{m_e}(x,\epsilon), x\in X, \epsilon>0\}$ is the same as the topology generated by open sets $\{B_r(x), x\in X, r>0\}.$
   \begin{theorem}
    $h_{m_e}(T)=h_e^{\prime} (T).$
   \end{theorem}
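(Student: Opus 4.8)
The plan is to prove the stronger statement that \emph{all} the finite‑level counting functions coincide exactly, namely
$r_n(\epsilon,K,T,m_e)=r'_n(\epsilon,K,T,e)$ and $s_n(\epsilon,K,T,m_e)=s'_n(\epsilon,K,T,e)$ for every natural number $n$, every $\epsilon>0$ and every compact $K\subseteq X$; the equality of entropies then follows by passing to $\limsup_{n\to\infty}\frac1n\log(\cdot)$, then letting $\epsilon\to 0$, then taking the supremum over compact sets. The first step is to record the identity for the iterated (Bowen) metrics: since $m_e=\max\{e(\cdot,\cdot),e(\cdot,\cdot)\text{ with arguments swapped}\}$, one has
\[
(m_e)_n(x,y)=\max_{0\le i\le n-1}\max\{e(T^ix,T^iy),\,e(T^iy,T^ix)\}=\max\{e_n(x,y),\,e_n(y,x)\}.
\]
Hence $(m_e)_n(x,y)\le\epsilon$ if and only if $e_n(x,y)\le\epsilon$ and $e_n(y,x)\le\epsilon$, and $(m_e)_n(x,y)>\epsilon$ if and only if $e_n(x,y)>\epsilon$ or $e_n(y,x)>\epsilon$.

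Next I would translate the two definitions through this equivalence. A set $F\subseteq X$ is an $(n,\epsilon)$-spanning set of $K$ with respect to $m_e$ and $T$ precisely when for each $x\in K$ there is $y\in F$ with $(m_e)_n(x,y)\le\epsilon$; by the displayed equivalence this is exactly the requirement that for each $x\in K$ there be $y\in F$ with $e_n(x,y)\le\epsilon$ and $e_n(y,x)\le\epsilon$, which is the definition of an $(n,\epsilon)$-span of $K$ with respect to the quasi‑metric $e$. Thus the families of $(n,\epsilon)$-spanning sets literally coincide, giving $r_n(\epsilon,K,T,m_e)=r'_n(\epsilon,K,T,e)$. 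Running the same argument with the second equivalence shows that the families of $(n,\epsilon)$-separated subsets also coincide, so $s_n(\epsilon,K,T,m_e)=s'_n(\epsilon,K,T,e)$.

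Finally, recall it was already observed that the topology generated by the $m_e$-balls coincides with the topology of $X$; consequently a subset of $X$ is compact in $(X,m_e)$ exactly when it is compact in $(X,e)$, so the two entropies are suprema over the same collection of sets. Combining this with the equalities of the counting functions and taking the limits in the stated order yields $h_{m_e}(T)=h'_e(T)$. I do not anticipate a genuine obstacle here: the only points that need care are the verification of the Bowen‑metric identity $(m_e)_n(x,y)=\max\{e_n(x,y),e_n(y,x)\}$ and the remark that ``compact'' means the same on both sides, and both are immediate. \emph{Alternatively}, one could deduce the theorem from Theorem $3.2$ by noting that $d_e(x,y)\le m_e(x,y)\le 2\,d_e(x,y)$, so $d_e$ and $m_e$ are uniformly (indeed Lipschitz) equivalent metrics on $X$, and the entropy defined via spanning/separated sets is invariant under such equivalence; but the direct route above is cleaner because it needs no factor‑of‑two estimates at all.
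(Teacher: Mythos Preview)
Your proposal is correct and follows essentially the same route as the paper: both arguments compute the Bowen iterate $(m_e)_n(x,y)=\max\{e_n(x,y),e_n(y,x)\}$, read off that $(m_e)_n(x,y)\le\epsilon$ iff $e_n(x,y)\le\epsilon$ and $e_n(y,x)\le\epsilon$, and conclude that the $(n,\epsilon)$-spanning sets for $m_e$ and for $e$ literally coincide, whence the entropies agree. (Minor slip: in your alternative remark you meant to cite Theorem~3.1, the $d_e$ result, rather than Theorem~3.2 itself.)
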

   \begin{proof}
   Since
   \begin{center}
   $m_{en}(x,y)=\max \{m_e(x,y),...,m_e(T^{n-1}x,T^{n-1}y)\} =\max \{\max\{e(x,y),e(y,x)\},..., \max\{e(T^{n-1}(x),T^{n-1}(y)), e(T^{n-1}(y),T^{n-1}(x))\}\},$
      \end{center}
  then $m_{en}(x,y)<\epsilon$. Hence\\
   $\max \{\max\{e(x,y),e(y,x)\},...,\max\{e(T^{n-1}(x),T^{n-1}(y)), e(T^{n-1}(y),T^{n-1}(x))\}\}$\\ $<\epsilon$.
 Therefore $\max\{e(T^{i}(x),T^{i}(y)), e(T^{i}(y),T^{i}(x))\}<\epsilon$, for every $1\leq i\leq n-1$. So $e(T^{i}(x),T^{i}(y))<\epsilon$ and $e(T^{i}(y),T^{i}(x))<\epsilon$, $1\leq i\leq n-1$.\\ Hence $e_n(x,y)<\epsilon$ and $e_n(y,x)<\epsilon$. This proves that any $(n,\epsilon)$-spanning set for $K$ with respect to the metric $m_e$ is an $(n,\epsilon)$-spanning set for $K$ with respect to the quasi-metric $e$.\\
  If  $e_n(x,y)<\epsilon$ and $e_n(y,x)<\epsilon$ then $e(T^{i}(x),T^{i}(y))<\epsilon$ and $e(T^{i}(y),T^{i}(x))<\epsilon$, $1\leq i\leq n-1$. \\So  \\$\max \{\max\{e(x,y),e(y,x)\},...,\max\{e(T^{n-1}(x),T^{n-1}(y)), e(T^{n-1}(y),T^{n-1}(x))\}\}\\<\epsilon$.\\ Therefore $m_{en}(x,y)<\epsilon$. Thus any $(n,\epsilon)$-spanning set for $K$ with respect to the quasi-metric $e$ is an $(n,\epsilon)$-spanning set for $K$ with respect to the metric $m_e$.\\
  The set $E$ is an $(n,\epsilon)$-spanning set for $K$ with respect to the metric $m_e$ if and only if $E$ is an $(n,\epsilon)$-spanning set for $K$ with respect to the quasi-metric $e$. Hence $h_{m_e}(T)=h_e^{\prime} (T).$
  \end{proof}
  Theorems $3.1$ and $3.2$ imply to the following corollary.\\
 {\bf Corollary:} $h_{m_e}(T)=h_{d_e}(T).$
 \section{ Another approach to the topological entropy}
 \label{sec4}
A subset $E$ of $X$ is called a $(\epsilon,n)$-span of $K$ if for given $ x\in K$, there is $y\in F$ with $e_n(x,y)\leq \epsilon$, or $e_n(y,x)\leq \epsilon$. (i.e.
$y\in \cap_{i=0}^{n-1}T^{-i}\overline{B^r_\epsilon (T^ix)},$
 or
$x\in \cap_{i=0}^{n-1}T^{-i}\overline{B^r_\epsilon (T^iy)}).$\\
  A subset $E$ of $K$ is called $(\epsilon, n)$-separated with respect to $T$ if $x,y\in E$, and $x\neq y$, then $e_n(x,y)>\epsilon$ and $e_n(y,x)>\epsilon.$(i.e., If $x,y\in E$, and $x\neq y$, then $y\notin \cap_{i=0}^{n-1}T^{-i}\overline {B^r_\epsilon (T^ix)}$ and $x\notin \cap_{i=0}^{n-1}T^{-i}\overline {B^r_\epsilon (T^iy)}$).\\
 $r^{\prime\prime}_n(\epsilon, K, T)$ denotes the smallest cardinality of any ($\epsilon,n$)-spanning set for $K$ with respect to $T$ and $s^{\prime\prime}_n(\epsilon, K, T)$ denotes the largest cardinality of any ($\epsilon,n$)-separated subset of $K$ with respect to $T$.\\
  {\bf Remarks:}\\
  We have\\
  1) $r^{\prime\prime }(\epsilon, K, T)\leq s^{\prime\prime}(\epsilon, K, T)\leq r^{\prime\prime}(\frac{\epsilon}{2}, K, T)$,\\
  2) $s(.,K)$ is non-increasing map on $(0,+\infty)$,\\
  3) $h^{\prime \prime}(T,K)=\lim_{\epsilon\rightarrow 0}s^{\prime \prime}(\epsilon,K,T)$, so $h^{\prime\prime}(T)=\sup_Kh^{\prime\prime} (T,K)$\\ $=\lim_{\epsilon\rightarrow 0}s^{\prime \prime}(\epsilon,K,T)$, \\where the supremum is taken over the collection of all compact subset of $X$.\\
  We define:
  \begin{align*}
  h^{\prime\prime}_{e}(T)&=\sup_K\lim_{\epsilon \rightarrow 0}\limsup_{n\rightarrow \infty}\frac{\log (r^{\prime\prime}_n(\epsilon, K, T, e))}{n}\\ &=\sup_K\lim_{\epsilon \rightarrow 0}\limsup_{n\rightarrow \infty}\frac{\log (s^{\prime\prime}_n(\epsilon, K, T, e))}{n}.
    \end{align*}

\begin{theorem}
$ h^{\prime\prime}_{e}(T) \leq h^{\prime}_{e}(T).$
\end{theorem}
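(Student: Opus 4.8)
The plan is to compare the two notions of spanning set at the level of a fixed compact set $K$, a fixed natural number $n$, and a fixed $\epsilon>0$, and then to push the resulting inequality through the three limiting operations ($\limsup_{n\to\infty}$, $\lim_{\epsilon\to 0}$, $\sup_K$) out of which $h'_e$ and $h''_e$ are built.

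First I would unwind the definitions and observe that the defining condition of an $(\epsilon,n)$-span is logically weaker than that of an $(n,\epsilon)$-span: for $x\in K$ the requirement ``there is $y\in F$ with $e_n(x,y)\le\epsilon$ \emph{or} $e_n(y,x)\le\epsilon$'' is implied by ``there is $y\in F$ with $e_n(x,y)\le\epsilon$ \emph{and} $e_n(y,x)\le\epsilon$''. Hence every $(n,\epsilon)$-spanning set for $K$ in the sense of Section~2 is automatically an $(\epsilon,n)$-spanning set for $K$ in the sense of Section~4. Applying this to a set of minimal cardinality of the first type gives
\[
 r^{\prime\prime}_n(\epsilon,K,T,e)\le r^{\prime}_n(\epsilon,K,T,e)
\]
for all $n\in\mathbb{N}$, all $\epsilon>0$, and all compact $K\subseteq X$.

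Next I would take $\log(\cdot)$, divide by $n$, and pass to $\limsup_{n\to\infty}$, obtaining $r^{\prime\prime}(\epsilon,K,T,e)\le r^{\prime}(\epsilon,K,T,e)$. Since both $r^{\prime\prime}(\,\cdot\,,K,T,e)$ and $r^{\prime}(\,\cdot\,,K,T,e)$ are non-increasing in $\epsilon$ (as recorded in the Remarks of Sections~2 and~4), the limits as $\epsilon\to 0^{+}$ exist in $[0,+\infty]$ and preserve the inequality, so $h^{\prime\prime}(T,K)\le h^{\prime}(T,K)$ for every compact $K$. Taking the supremum over all compact subsets of $X$ on each side yields $h^{\prime\prime}_e(T)\le h^{\prime}_e(T)$, which is the assertion. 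One could run the argument equally well through separated sets: an $(\epsilon,n)$-separated set demands $e_n(x,y)>\epsilon$ \emph{and} $e_n(y,x)>\epsilon$, a condition \emph{stronger} than being $(n,\epsilon)$-separated, so $s^{\prime\prime}_n(\epsilon,K,T)\le s^{\prime}_n(\epsilon,K,T)$, and the Remarks let one compute $h^{\prime}$ and $h^{\prime\prime}$ from the $s$-quantities, giving the same conclusion.

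The argument is essentially a single set-inclusion once the definitions are spelled out, so I do not expect a genuine obstacle. The one point needing a moment's care is keeping straight which of the ``and''/``or'' conditions is the stronger one in the spanning case as opposed to the separated case (the direction of the implication flips), together with invoking the monotonicity in $\epsilon$ recorded earlier, which is what legitimizes the passage to the limit $\epsilon\to 0$.
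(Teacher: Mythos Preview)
Your proposal is correct and follows essentially the same approach as the paper: both establish the basic inequality $r''_n(\epsilon,K,T,e)\le r'_n(\epsilon,K,T,e)$ (and the companion inequality for separated sets) and then pass through $\limsup_n$, $\lim_{\epsilon\to 0}$, and $\sup_K$. If anything, you are more explicit than the paper, which simply asserts the two inequalities without spelling out the ``and/or'' logic that you supply.
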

\begin{proof}
The inequalities
$ r^{\prime\prime}_n(\epsilon, K, T, e) \leq r^{\prime}_n(\epsilon, K, T, e)$
 and
 $s^{\prime\prime}_n(\epsilon, K, T, e)\leq s^{\prime}_n(\epsilon, K, T, e),$\\
imply that
 \begin{align*}
  h^{\prime\prime}_{e}(T)&=\sup_K\lim_{\epsilon \rightarrow 0}\limsup_{n\rightarrow \infty}\frac{\log (r^{\prime\prime}_n(\epsilon, K, T, e))}{n}\\ &=\sup_K\lim_{\epsilon \rightarrow 0}\limsup_{n\rightarrow \infty}\frac{\log (s^{\prime\prime}_n(\epsilon, K, T, e))}{n}\\&\leq \sup_K\lim_{\epsilon \rightarrow 0}\limsup_{n\rightarrow \infty}\frac{\log (r^{\prime}_n(\epsilon, K, T, e))}{n}\\ &=\sup_K\lim_{\epsilon \rightarrow 0}\limsup_{n\rightarrow \infty}\frac{\log (s^{\prime}_n(\epsilon, K, T, e))}{n}\\&= h^{\prime}_{e}(T).
    \end{align*}
\end{proof}
The following example shows that a $(\epsilon,n)$-span may not be the same as an $(n,\epsilon)$-span.
\begin{example}
Let quasi-metric $e$ be as Example $1$, $K=[0,1]$ and $T=I$. Then a $(\epsilon,n)$-span of $K$ with respect to $T$ is\\
\begin{align*}
  F:=  \left\{\begin{array}{lr} x & \epsilon \geq 1 \\ \{\frac{\epsilon}{2},\frac{2\epsilon}{2},\frac{3\epsilon}{2},...,([\frac{2}{\epsilon}]+1)\epsilon \}& \epsilon<1  \end{array},\right.
  \end{align*}
 where $x$ is any point in $K$.\\
 And
 \begin{align*}
  E:=  \left\{\begin{array}{lr} \emptyset & \epsilon \geq 1 \\ \{0, x \}& \epsilon<1  \end{array}.\right.
 \end{align*}
 is a $(\epsilon,n)$-separated set of $K$ with respect to $T$,
 where $\epsilon<x<1$.
 \end{example}
 Let $(X,e)$ be a quasi-metric space. A map $T:X\rightarrow X$ is called uniformly continuous if for given $\epsilon>0$ there is $\delta >0$ such that $e(x,y)<\delta$ implies $e(T(x),T(y))<\epsilon.$ The space of all uniformly continuous maps of a quasi-metric space $(X,e)$ is denoted by $UC(X,e)$.\\
 \begin{theorem}
If $(X,e)$ is a quasi-metric space, $T\in UC(X,e)$ and $m$ is a natural number then $h^{\prime\prime}(T^m)=mh^{\prime\prime}(T).$
 \end{theorem}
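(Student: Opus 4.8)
The plan is to reproduce, in the present asymmetric setting and for the double-prime quantities of this section, the classical argument that topological entropy is multiplicative under iteration. Fix a compact set $K\subseteq X$; since $r^{\prime\prime}_n\le r^{\prime}_n<\infty$ all the relevant quantities are finite, and since $h^{\prime\prime}(T,K)=\lim_{\epsilon\to0}r^{\prime\prime}(\epsilon,K,T)$ (and likewise for $T^m$) it suffices to compare the $r^{\prime\prime}$'s. Write $\tilde e_n(x,y)=\max_{0\le i\le n-1}e\bigl(T^{mi}x,T^{mi}y\bigr)$ for the $n$-step quasi-metric attached to $T^m$, so that an $(\epsilon,n)$-span of $K$ with respect to $T^m$ is exactly a set $F$ such that every $x\in K$ admits some $y\in F$ with $\tilde e_n(x,y)\le\epsilon$ or $\tilde e_n(y,x)\le\epsilon$. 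I will also use the elementary monotonicity $r^{\prime\prime}_N(\epsilon,K,T)\le r^{\prime\prime}_{N+1}(\epsilon,K,T)$, valid because $e_N\le e_{N+1}$ makes every $(\epsilon,N{+}1)$-span an $(\epsilon,N)$-span.

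For $h^{\prime\prime}(T^m)\le mh^{\prime\prime}(T)$: since $\{0,m,\dots,(n-1)m\}\subseteq\{0,\dots,mn-1\}$ we have $\tilde e_n\le e_{mn}$, so every $(\epsilon,mn)$-span of $K$ with respect to $T$ is an $(\epsilon,n)$-span of $K$ with respect to $T^m$, giving $r^{\prime\prime}_n(\epsilon,K,T^m)\le r^{\prime\prime}_{mn}(\epsilon,K,T)$. Dividing by $n$, passing to $\limsup_{n\to\infty}$, and using that the multiples of $m$ form a subsequence of the naturals, we get $r^{\prime\prime}(\epsilon,K,T^m)\le m\,r^{\prime\prime}(\epsilon,K,T)$; letting $\epsilon\to0$ and taking $\sup_K$ gives $h^{\prime\prime}(T^m)\le mh^{\prime\prime}(T)$.

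For the reverse inequality uniform continuity enters. Given $\epsilon>0$, a routine induction shows $T,T^2,\dots,T^{m-1}$ are all uniformly continuous, hence there is $\delta\in(0,\epsilon]$ with $e(x,y)<\delta\Rightarrow e(T^jx,T^jy)<\epsilon$ for every $0\le j\le m-1$. Let $F$ be a minimal $(\tfrac{\delta}{2},n)$-span of $K$ with respect to $T^m$. For $x\in K$ pick $y\in F$ with $\tilde e_n(x,y)\le\tfrac{\delta}{2}$ or $\tilde e_n(y,x)\le\tfrac{\delta}{2}$; in the first case $e(T^{mi}x,T^{mi}y)<\delta$ for $0\le i\le n-1$, so $e(T^{mi+j}x,T^{mi+j}y)<\epsilon$ for $0\le j\le m-1$, and since $mi+j$ runs over $\{0,\dots,mn-1\}$ this means $e_{mn}(x,y)<\epsilon$; the second case is symmetric. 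Hence $F$ is an $(\epsilon,mn)$-span of $K$ with respect to $T$, so $r^{\prime\prime}_{mn}(\epsilon,K,T)\le r^{\prime\prime}_n(\tfrac{\delta}{2},K,T^m)$. Combining this with the monotonicity above, for every large $N$, putting $n=\lceil N/m\rceil$ (so $m(n-1)<N\le mn$) we obtain $r^{\prime\prime}_N(\epsilon,K,T)\le r^{\prime\prime}_{mn}(\epsilon,K,T)\le r^{\prime\prime}_n(\tfrac{\delta}{2},K,T^m)$, and therefore
$$\frac1N\log r^{\prime\prime}_N(\epsilon,K,T)\le\frac{n}{m(n-1)}\cdot\frac1n\log r^{\prime\prime}_n(\tfrac{\delta}{2},K,T^m).$$
Taking $\limsup$ as $N\to\infty$ (so $n\to\infty$ and $\tfrac{n}{m(n-1)}\to\tfrac1m$) yields $m\,r^{\prime\prime}(\epsilon,K,T)\le r^{\prime\prime}(\tfrac{\delta}{2},K,T^m)$; letting $\epsilon\to0$ (so $\tfrac{\delta}{2}\to0$, and using that $r^{\prime\prime}(\cdot,K,T^m)$ increases to $h^{\prime\prime}(T^m,K)$ as its argument tends to $0$) gives $m\,h^{\prime\prime}(T,K)\le h^{\prime\prime}(T^m,K)\le h^{\prime\prime}(T^m)$, and $\sup_K$ gives $mh^{\prime\prime}(T)\le h^{\prime\prime}(T^m)$. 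With the previous paragraph this proves $h^{\prime\prime}(T^m)=mh^{\prime\prime}(T)$.

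I expect the delicate point to be the passage between the $\limsup$ over all $N$ and the $\limsup$ over multiples of $m$: the inequality $r^{\prime\prime}_{mn}(\epsilon,K,T)\le r^{\prime\prime}_n(\tfrac{\delta}{2},K,T^m)$ controls only the subsequence indexed by multiples of $m$, so one really needs the monotonicity of $N\mapsto r^{\prime\prime}_N(\epsilon,K,T)$ in order to trap an arbitrary $N$ between two consecutive multiples of $m$ and then let the correction factor $\tfrac{n}{m(n-1)}$ tend to $\tfrac1m$. The asymmetry of $e$ is harmless here, because the double-prime span condition is the disjunction ``$\tilde e_n(x,y)\le\epsilon$ or $\tilde e_n(y,x)\le\epsilon$'' and uniform continuity is applied in whichever of the two orderings happens to hold.
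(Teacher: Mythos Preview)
Your proof is correct and follows essentially the same approach as the paper's: comparing $(\epsilon,mn)$-spans for $T$ with $(\epsilon,n)$-spans for $T^m$, and invoking uniform continuity of the iterates $T,\dots,T^{m-1}$ for the reverse inequality. You are in fact more careful than the paper on two points it glosses over: the passage from the $\limsup$ along multiples of $m$ to the $\limsup$ over all $N$ (which you handle via the monotonicity of $N\mapsto r^{\prime\prime}_N$), and the $\le$-versus-$<$ issue when feeding the span condition into the uniform-continuity implication (which you handle by working with $\tfrac{\delta}{2}$).
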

 \begin{proof}
 Let $F$ be a $(\epsilon,mn)$-span $K$ with respect $T$. So for  each $x\in K$ there is $y\in F$ such that
 \begin{center}
 $\max_{1\leq i\leq mn-1}\{e(T^i(x),T^i(y)\}<\epsilon$ or $\max_{1\leq i\leq mn-1}\{e(T^i(y),T^i(x)\}<\epsilon.$
 \end{center}
  Hence for each $x\in K$ there is $y\in F$ such that
\begin{center}
 $\max_{1\leq i\leq n-1}\{e(T^{mi}(x),T^{mi}(y)\}<\epsilon$ or $\max_{1\leq i\leq n-1}\{e(T^{mi}(y),T^{mi}(x)\}<\epsilon$.
 \end{center}
 Thus $F$ is a $(\epsilon,n)$-span of $K$ with respect $T^m$. Hence
 \begin{center}
 $r^{\prime\prime}_n(\epsilon,K,T^m)\leq r^{\prime\prime}_{mn}(\epsilon,K,T),$
 \end{center}
 and we have
 \begin{center}
 $\frac{1}{n}\log r^{\prime\prime}_n(\epsilon,K,T^m)\leq \frac{m}{mn}\log r^{\prime\prime}_{mn}(\epsilon,K,T),$
 \end{center}
so $h^{\prime\prime}(T^m)\leq mh^{\prime\prime}(T).$\\
 If $T\in UC(X,e)$ then for given $\epsilon >0$ there is $\delta>0$ such that
 \begin{center}
 $e(x,y)<\delta \Rightarrow e_m(x,y)<\epsilon$ and $e(y,x)<\delta \Rightarrow e_m(y,x)<\epsilon.$
 \end{center}
 $e(x,y)<\delta$ or  $e(y,x)<\delta$ imply that $e_m(x,y)<\epsilon$ or $e_m(y,x)<\epsilon$. Therefore a $(\delta,n)$-spanning set of $K$ with respect to $T^m$ is an $(\epsilon,mn)$-spanning set of $K$ with respect to $T$. Thus
  \begin{center}
  $r^{\prime\prime}_n(\delta,K,T^m)\geq r^{\prime\prime}_{mn}(\epsilon,K,T).$
  \end{center}
  Hence
  \begin{center}
  $r^{\prime\prime}(\delta,K,T^m)\geq mr^{\prime\prime}(\epsilon,K,T).$
 \end{center}
 So $h^{\prime\prime}(T^m)\geq mh^{\prime\prime}(T).$ Thus $h^{\prime\prime}(T^m)=mh^{\prime\prime}(T).$
 \end{proof}
 \section{Conclusion}
  \label{sec5}
  In this work we present  three methods to consider topological entropy. We show that if we add symmetric  property on the space via quasi-metric, then the entropy of a continuous map do not decreases! (Theorem 4.1). Consideration of the validity of the converse of   Theorem 4.1 is a topic for further research.
  
  \section*{Acknowledgments}
The authors would like to thank  the referees for their valuable comments.



\begin{thebibliography}{99}

\bibitem{Adl} R.L. Adler, A.G. Konheim, M.H. McAndrew.
Topological entropy. Trans. Amer. Math. Soc., 1965, 114: 309-319.
\bibitem{Ban}
   C. Bandt, G. Keller, B. Pompe. Entropy of interval maps via permutations.
   Nonlinearity, 2002,  15: 1595--1602.
   \bibitem{Ber} F. Berlai, D. Dikranjan, A.G. Bruno. Scale function vs topological entropy. Topology and its Applications, 2013, 160, 18:  2314--2334.
   \bibitem{Bo} R. Bowen.
Topological entropy and Axiom A, Global Analysis. Proc. Symp. Pure Math, 1970,
 14: 23–-42.
\bibitem{Br}
 J.R. Brown. Ergodic theory and topological dynamics. Columbia University, New York, 1976.
 \bibitem{Dik} D. Dikranjan, A.G. Bruno. The connection between topological and algebraic entropy. Topology and its Applications, 2012. 159,  13: 2980--2989.
 \bibitem{Din} E.I. Dinaburg.
The relation between topological entropy and metric entropy. Soviet Math. Dokl., 1970, 11: 13-–16.
\bibitem{Ju} J. Juang, S.F. Shieh. Piecewise linear maps, Liapunov exponents and entropy. Journal of Mathematical Analysis and Applications, 2008,   338: 358–-364.
\bibitem{Ju2} J. Juang, S.F. Shieh. On the total variation, topological entropy and sensitivity
for interval maps. Journal of Mathematical Analysis and Applications, 2008,  341: 1055--1067.
\bibitem{KE1} K. Keller, M. Sinn. Kolmogorov–Sinai entropy from the ordinal viewpoint. Physica D: Nonlinear Phenomena, 2010, 239,  12: 997-1000.
\bibitem{KE} K. Keller, A.M. Unakafov, V.A. Unakafova. On the relation of KS entropy and permutation entropy. Physica D: Nonlinear Phenomena, 2012, 241, 18: 1477-1481.
\bibitem{Mol} H. Molaei and M.R. Molaei. Dynamically defined topological entropy. Journal of Dynamical
Systems and Geometric Theories, 2008, 6: 95-100.
   \bibitem{Mu} I. Muller.  Entropy: a subtle concept in thermodynamics, in "Entropy, Andress Greven", Gerhard Keller, Gerald Warnecke, eds.. Princeton University Press, 2003.
   \bibitem{Pa} M. Patrao, Entropy and its variational principle for non-compact metric spaces. Ergodic Theory and Dynamical Systems, 2010, 30: 1529-1542.
\bibitem{Sha} C. Shannon. A mathematical theory of communication.  Bell Systems Technical Journal, 1948  27: 379-423.
\bibitem{Si}
 G. Sinai.  Dynamical Systems, Ergodic Theory and Applications. Springer verlag, New York, 2000.

 \bibitem{Sto} R.A. Stolenberg. A completion for a quasi uniform space. Proc. Amer. Math. Soc., 1967, 18: 864-867.
\bibitem{St} R.A. Stoltenberg. On quasi-metric spaces. Duke Math. J., 1969, 36: 65-71.
 \bibitem{Wa}
  P. Walters. An Introduction to Ergodic Theory. Springer Verlag. New York, 2000.
\bibitem{Ya} K. Yano. Entropy of random chaotic interval map with noise which causes coarse-graining. Journal of Mathematical Analysis and Applications, 2014, 414: 250-258.






\end{thebibliography}
\end{document}